\newtheorem{thm}{Theorem}[section]
\newtheorem{lem}[thm]{Lemma}
\newtheorem{proposition}[thm]{Proposition}
\newtheorem{remark}[thm]{Remark}
\newtheorem{corollary}[thm]{Corollary}
\newtheorem{conjecture}[thm]{Conjecture}
\newcommand{\rev}[1]{{\color{black} #1}}
\newcommand{\R}{\mathbb{R}}
\newcommand{\Z}{\mathbb{Z}}
\DeclareMathOperator{\Hom}{Hom}
\DeclareMathOperator{\SO}{SO}
\DeclareMathOperator{\Orth}{O}
\DeclareMathOperator{\Sym}{Sym}
\begin{document}

\title{Orbit recovery for spherical functions}

\author{Tamir Bendory, Dan Edidin, Josh Katz, and Shay Kreymer}

\maketitle

\begin{abstract}
Orbit recovery is a central problem in both mathematics and applied sciences, with important applications to structural biology. This paper focuses on recovering generic orbits of functions on
$\R^{n}$ and the sphere $S^{n-1}$
under the rotation action of $\SO(n)$. 
Specifically, we demonstrate that invariants of degree three (called the bispectrum) suffice to recover generic orbits of functions in finite-dimensional approximations of $\mathbf{L}^2(\R^n)$
obtained by band-limiting the spherical component and discretizing the radial direction. In particular, our main result explicitly bounds the number of samples in the radial direction required for recovery from the degree three invariants. From an application perspective, the most important case is 
$\SO(3)$, which arises in many scientific fields, and in particular, plays a central role in leading structural biology applications such as cryo-electron tomography and cryo-electron microscopy. Our result for $\SO(3)$ states that considering three spherical shells (i.e., samples in the radial direction) is sufficient
to recover generic orbits, which verifies an implicit conjecture made in~\cite{bandeira2017estimation}.
Our proof technique provides an explicit, computationally efficient algorithm to recover the signal by successively solving systems of linear equations. We implemented this algorithm and demonstrated its effectiveness on three protein structures.
\end{abstract}

\section{Introduction}
Orbit recovery problems lie at the intersection of invariant theory and practical scientific applications, in particular in structural biology~\cite{bendory2020single}.  
A prototypical example is the multi-reference alignment (MRA) problem, where the goal is to recover a signal from noisy observations subject to random group actions~\cite{bandeira2014multireference,bandeira2017estimation,bendory2017bispectrum}. Invariant polynomials have emerged as a powerful tool for signal estimation in this setting, as they bypass the need to estimate the unknown group elements, a task that becomes particularly difficult in high-noise regimes. Remarkably, it has been shown that, from a statistical standpoint, this approach is optimal for signal recovery in such regimes~\cite{abbe2018estimation,abbe2018multireference}. Beyond MRA, the problem of constructing polynomials that separate all orbits in a representation has been studied for over a century, with significant advances made in recent years. See for example~\cite{derksen2015computational, draisma2008polar, domokos2017degree}. 
In applications, however, it is typically only necessary to separate orbits almost surely or generically. Recent work has demonstrated
that the number and degrees of polynomials required to separate generic orbits is often much lower than those necessary to recover all orbits~\cite{bandeira2017estimation, edidin2023orbit, blum-smith2024degree, edidin2024reflection}.

The simplified, and most studied MRA model, is when the group of circular shifts $\Z_L$ acts on $\R^L$. In this case, it was shown that a generic signal can be recovered from the third-degree invariant~\cite{bendory2017bispectrum,perry2019sample}. Follow-up papers extended the analyses to more intricate groups and transformations, such as $\SO(2)$~\cite{ma2019heterogeneous,drozatz2025provable}, the dihedral group~\cite{bendory2022dihedral, edidin2024reflection} and dilations~\cite{yin2024bispectrum}; see~\cite{bandeira2017estimation} for the presentation of a wide range of different cases.  

Although recovering an orbit from invariants of degree three would seem to be an intractable computational problem due to the difficult nature of solving systems of polynomial equations, several of the examples that have generic degree three separation also have efficient and provable algorithms. For example, in~\cite{bendory2017bispectrum} a series of efficient algorithms, including frequency marching and semi-definite programming, were designed for the case of $\Z_L$ acting on $\R^L$. A Jennrich-type algorithm for the same case was designed in~\cite{perry2019sample} and this was extended to the regular representation of any finite group in~\cite{edidin2025orbit}. Spectral algorithms were also designed in~\cite{chen2018spectral}; see~\cite{abbe2018multireference,drozatz2025provable,balanov2026projected} for related algorithms. 

In this work, we study representations of $\SO(n)$ of the form 
\begin{align}\label{spherical}
V_L = (\mathbf{L}^2(S^{n-1})_L)^R,
\end{align}
where $\mathbf{L}^2(S^{n-1})_L$ refers to $L$-bandlimited functions on the $(n-1)$-sphere. 
Our goal is to determine conditions that ensure that invariants of degree three can recover the orbit of a generic 
signal in $V_L$. 
Invariants of degree three are also referred to as the bispectrum, and we use the two terms interchangeably.
We follow a frequency marching strategy, first suggested by the authors of~\cite{bandeira2017estimation} and also used in~\cite{liu2021algorithms}. 
To derive the theoretical results, we use a small subset of the degree-three invariants to recursively solve the components of higher frequency from previously solved invariants of lower degree. Precisely, the invariants we consider give rise to linear equations for the component of a signal at higher frequency with coefficients determined by the lower frequency components. 
The main result of this paper, Theorem~\ref{thm.main}, gives a bound on the number of shells $R$ (i.e., samples in the radial direction) as a function of $n$ and $L$ needed to ensure that
we can recover the generic orbit from invariants of degree at most three. The proof is based on showing that the set of linear equations we consider always has full rank. 
The main result is presented in Section~\ref{sec:results} and proved in Sections~\ref{sec:invariant}  and~\ref{sec:proof_main}.

Section~\ref{sec:so3} is devoted to the analysis of the $n=3$ case, the main application of our work. The problem of identifying an unknown signal in $\R^3$ up to rotation 
by an element in $\SO(3)$ occurs in a number of contexts, including two related techniques in molecular imaging, cryo-electron microscopy (cryo-EM)~\cite{milne2013cryo,kam1980reconstruction} and cryo-electron tomography (cryo-ET)~\cite{turk2020promise}. 
For the group $\SO(3)$, our main result, Theorem~\ref{thm.so3} states that $R=3$ shells (independent of $L$) are sufficient to recover the generic signal in $(\mathbf{L}^2(S^2)_L)^3$, for any $L$, from invariants of degree at most three. This affirms an implicit conjecture made in~\cite{bandeira2017estimation} and the bound is known to be optimal. 

In Section~\ref{sec:numerics}, we implement the proposed frequency marching algorithm for $\SO(3)$ to recover the orbits using degree-three invariants. 
Numerically, there is no reason to discard any of the available equations. Leveraging all of them allows us to implement a numerically stable, frequency-marching algorithm that provably recovers the signal by successively solving systems of linear equations. 
We demonstrate the effectiveness, efficiency, and robustness of our approach for \rev{three} molecular structures in the Electron Microscopy Data Bank (EMDB). 

\section{Problem formulation and statement of results} \label{sec:results}

\subsection{Formulation of the problem}
Recall that a polynomial $f \in \R[x_1, \ldots , x_n]$ is {\em harmonic} if $\Delta f = 0$ where $\Delta = \sum_{i=1}^n \frac{\partial^2}{\partial x_i^2}$
is the Laplacian. Since the Laplacian is rotation invariant, the translation of a harmonic polynomial by a rotation
is again a harmonic polynomial. We denote by $H_\ell$ the vector space of homogeneous harmonic polynomials of degree $\ell$ with
the action of $\SO(n)$ by rotation.
Since a homogeneous polynomial is uniquely determined by its values on the sphere, we view elements of $H_\ell$ as spherical harmonic polynomials.

The space of homogeneous polynomials of degree $\ell$ on $S^{n-1}$ decomposes as an $\SO(n)$ representation into irreducible components as
\begin{equation} \label{eq.symdecomp}
\Sym^\ell(S^{n-1})=\bigoplus_{i= 0}^{\lfloor \ell/2 \rfloor} H_{\ell-2i}.
\end{equation}
Equation~\eqref{eq.symdecomp} yields the following formula for the dimension of $H_\ell$:
\begin{equation} \label{eq.harmonicdim}
m_\ell \colon = \dim H_\ell= \binom{n+\ell-1}{n-1}-\binom{n+\ell-3}{n-1}.
\end{equation}
When $n=3$, then $m_\ell = 2\ell+1$ and for general $n$ we have that $m_\ell\sim O(\ell^{n-2})$.

A basic result in harmonic analysis states that we have a decomposition of $\SO(n)$ representations $\mathbf{L}^2(S^{n-1}) = \oplus_{\ell=0}^\infty H_\ell$.
When $n=3$, the representations $H_\ell$ are the usual spaces of spherical harmonics studied in the physics literature; see for example~\cite{knapp2002lie,axler2001harmonic}.
We denote the finite sum $\oplus_{\ell=0}^L H_\ell$ as $\mathbf{L}^2(S^{n-1})_L$ and refer to this finite-dimensional representation
of $\SO(n)$ as the space of $L$-bandlimited functions on the $(n-1)$-sphere. 
In this work, we consider the finite-dimensional $\SO(n)$ representation of the form~\eqref{spherical}, which we view as a discrete approximation for $\mathbf{L}^2(\R^n)$, as a representation of $\SO(n)$. 
Our goal is to determine bounds on $R$---the number of spherical shells---which ensure that the generic signal 
$f \in V_L$ can be recovered from invariants of degree at most three.

\subsection{Main results}
We are now ready to present the main results of this paper. The proofs are provided in Sections~\ref{sec:proof_main} and Section~\ref{sec:so3}, and a technical background is provided in Section~\ref{sec:invariant}.

\begin{thm}\label{thm.main}
Let $V_L= (\mathbf{L}^2(S^{n-1})_L)^R$ be the finite-dimensional approximation of $\mathbf{L}^2(\R^n)$ by functions which are defined
on $R$ spherical shells and $L$-bandlimited on each shell. 
If $R \geq \frac{\rev{m_L} + (\lceil L/2 \rceil -1)}{(L-1)}$,  where $\rev{m_L = \dim H_L}$ is given in~\eqref{eq.harmonicdim}, then the $\Orth(n)$ orbit of a generic real valued $f\in \rev{V_L}$ is determined by $\SO(n)$-invariants of degree at most three.
\end{thm}


In the case of $\SO(3)$, the most important case from the scientific perspective, we can eliminate the reflection ambiguity, and we obtain the following result. Importantly, in this case, only three spherical shells are required, independent of $L$.

\begin{thm}[The $\SO(3)$ case.] \label{thm.so3}
If $R \geq 3$, then for any $L \geq 0$ the invariants of degree at most three separate generic orbits in $(\mathbf{L}^2(S^2)_{L})^R$.
\end{thm}

\rev{\begin{remark}
The constant multiplicity hypothesis in both Theorem~\ref{thm.main} and Theorem~\ref{thm.so3} can be relaxed.
For Theorem~\ref{thm.main}, the conclusion holds provided the number of shells $R_\ell$ in band $\ell$
satisfies $R_\ell \geq \frac{m_\ell + (\lceil \ell/2 \rceil -1)}{(\ell-1)}$. Likewise, the conclusion of
Theorem~\ref{thm.so3} holds provided each shell appears with multiplicity at least three, without requiring the multiplicities to be constant across shells.
\end{remark}}

 Theorem~\ref{thm.so3} improves the linear bound of $R\geq L+2$ given in~\cite{edidin2023orbit} and proves that the 
 hypothesis of~\cite[Theorem 4.19]{bandeira2017estimation} is always satisfied when $R \geq 3$, thereby proving an implicit conjecture made there. Moreover, the bound $R \geq 3$ is optimal since it is known that if $R < 3$ then invariants of degree at most three cannot recover signals with small band limit~\cite{bandeira2017estimation}.
It is an open question as to whether or not there is a constant multiplicity (independent of the band limit) so that degree 3 invariants recover a generic orbit for $\SO(n)$ with $n>3$.

\rev{\begin{remark}Theorems~\ref{thm.main} and~\ref{thm.so3} apply to generic signals. In particular our conclusion
does not a priori hold for 3-dimensional structures with an intrinsic symmetry. However, the frequency marching
algorithm of Section~\ref{sec:numerics} successfully reconstructed two molecular structures with approximate symmetry, TRPV1 ($C_4$ symmetry) and 20S proteasome ($D_7$ symmetry). 
An interesting direction for future work is to prove that the conclusion of Theorem~\ref{thm.so3} also holds for generic structures with symmetry.\end{remark}}

\subsection{Applications to sample complexity ananlys in multi-reference alignment}
The multi-reference alignment (MRA) model entails estimating a signal $f$ in a representation~$V$ of a compact group~$G$ from $m$ observations of the form
\begin{equation} \label{eq:mra}
    y_i = g_i\cdot f + \varepsilon_i, 
\end{equation}
where $g_1,\ldots,g_m\in G$ are random elements drawn from a uniform distribution over the group~$G$, and $\varepsilon_i\sim\mathcal{N}(0,\sigma^2 I)$ are i.i.d. realization of a Gaussian noise with variance $\sigma^2$. 
The goal is to estimate the signal $f$ from $y_1,\ldots,y_m$, while the group elements are treated as nuisance variables. 
Because there is no way to distinguish $f$ from $g\cdot f$, the MRA problem is an orbit recovery problem.
This model was first suggested as an abstraction of the cryo-EM model~\cite{bendory2020single}, and the problem has been studied in more generality as a prototype of statistical models with intrinsic algebraic structures, e.g.,~\cite{bandeira2014multireference,bandeira2017estimation,bendory2024transversality}. 

Sample complexity refers to the number of observations required to accurately estimate a signal. It was shown that the sample complexity of the MRA model, in the high noise regime $\sigma\to\infty,$ is determined by the lowest order moment that identifies the signal~\cite{abbe2018estimation,abbe2018multireference}. If the distribution over the group is uniform, then the moments are equivalent to invariant polynomials.
In particular,  as $m,\sigma\to\infty$, a necessary condition for accurate signal recovery is that $m/\sigma^{2d}\to\infty$, where $d$ is the lowest order moment of the observations that determines the orbit of $f$ uniquely. For example, if the first and second moments do not determine the signal, but the third moment does, then $m$ must scale faster than $\sigma^6$ for accurate recovery. 
Based on this result, beginning in~\cite{bandeira2017estimation}, 
the problem of identifying representations of compact groups for which the third moment can determine generic orbits
has been studied by a number of authors~\cite{blum-smith2024degree,edidin2023orbit, edidin2024reflection, edidin2025orbit, 
blum-smith2025generic}.

Immediate corollaries of Theorem~\ref{thm.main} and Theorem~\ref{thm.so3} are the following: 

\begin{corollary}[The sample complexity of multi-reference alignment for $\SO(n)$] \label{cor:mra_sample_complexity} 
Consider the MRA model~$\eqref{eq:mra}$ with $\sigma\to\infty$, where $V = \left(\oplus_{\ell=0}^L H_\ell\right)^R$ and $G =\SO(n)$.
If $R \geq \frac{\dim H_\ell + (\lceil \ell/2 \rceil -1)}{(\ell-1)}$, then 
the minimal number of observations required for accurate recovery of the $\Orth(n)$ orbit of $f$, regardless of any specific algorithm, is $m/\sigma^6\to\infty$.
\end{corollary}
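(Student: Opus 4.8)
The plan is to combine Theorem~\ref{thm.main} with the asymptotic description of high-noise MRA sample complexity from~\cite{abbe2018estimation,abbe2018multireference}: as $\sigma\to\infty$, the optimal scaling of the number of observations required for consistent orbit estimation is $m\sim\sigma^{2d}$, where $d$ is the smallest integer such that the moments of order at most $d$ of the observations determine the generic orbit. Since the group element is drawn uniformly from $\SO(n)$, these moments are, after the known Gaussian correction, exactly the $\SO(n)$-invariant coordinate polynomials of degree at most $d$ (the $i$-th moment recovering the degree-$i$ invariants). Hence the corollary reduces to the statement $d=3$ under the stated hypothesis on $R$, i.e.\ to the two inequalities $d\le3$ and $d\ge3$. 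The first is immediate from Theorem~\ref{thm.main}: under $R\ge\frac{\dim H_\ell+(\lceil\ell/2\rceil-1)}{\ell-1}$ for all $\ell>1$, the $\Orth(n)$-orbit of a generic $f\in V_L$ is determined by $\SO(n)$-invariants of degree at most three, hence by the first three moments, so $m/\sigma^6\to\infty$ suffices.

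For the reverse inequality $d\ge3$ --- equivalently, that the $\sigma^6$ rate cannot be improved --- I would show that the first two moments fail to separate generic orbits once $L\ge2$ and $n\ge3$. The first moment lies in the fixed subspace $H_0^{\oplus R}$ and records only $R$ numbers, so it is plainly insufficient. For the second moment, use the isotypic decomposition $V_L\cong\bigoplus_{\ell=0}^{L}H_\ell^{\oplus R}$ together with Schur's lemma: each $H_\ell$ is a real irreducible $\Orth(n)$-representation of orthogonal type, so the $\SO(n)$-invariants of degree at most two are (the $R$ linear invariants from $H_0$ together with) the shell-block Gram entries $\langle f_a^{(\ell)},f_b^{(\ell)}\rangle$, with no coupling between distinct $\ell$. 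Therefore the fiber of the degree-$\le2$ invariant map through a generic $f$ contains, inside each block $H_\ell\cong\R^{m_\ell}$, the entire $\Orth(m_\ell)$-orbit of the $R$-tuple $(f_1^{(\ell)},\dots,f_R^{(\ell)})$. Already the $\ell=2$ block contributes a factor of dimension at least $m_2-1=\binom{n+1}{2}-2$, which strictly exceeds $\dim\Orth(n)=\binom{n}{2}$ for $n\ge3$ (the difference is $n-2$); since every $\Orth(n)$-orbit in $V_L$ has dimension at most $\binom{n}{2}$, this fiber is not contained in a single $\Orth(n)$-orbit, so the degree-$\le2$ invariants cannot determine the generic orbit. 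Thus $d=3$, and~\cite{abbe2018estimation,abbe2018multireference} give the sample complexity scaling $m/\sigma^6\to\infty$.

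I expect the necessity direction to be where the work lies, essentially because one must delimit the hypotheses under which it holds. The dimension comparison above is clean for $L\ge2$ and $n\ge3$, but the small band-limit regime is genuinely different: for $L\le1$ the space $V_L$ is a tuple of scalars together with a tuple of $R$ vectors in $\R^n$, whose $\Orth(n)$-orbit is already recovered from pairwise inner products by the first fundamental theorem for the orthogonal group, so there $d=2$ and the corollary should be read with $L\ge2$; the degenerate case $n=2$ is classical and covered by the $\SO(2)$ literature cited above. Beyond isolating this regime the argument is routine, needing only the first fundamental theorem for $\Orth(m)$, the orthogonal-type irreducibility of the $H_\ell$, the elementary inequality $m_2>\binom{n}{2}+1$, and the two cited inputs --- Theorem~\ref{thm.main} and~\cite{abbe2018estimation,abbe2018multireference}.
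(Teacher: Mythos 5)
Your proposal is correct. The paper treats this corollary as immediate from Theorem~\ref{thm.main} together with the moment-based characterization of high-noise MRA sample complexity in \cite{abbe2018estimation,abbe2018multireference}; that is exactly the sufficiency half of your argument ($d\le 3$). Where you go beyond the paper is in explicitly establishing the necessity half $d\ge 3$: your reduction of the degree-$\le 2$ invariants, via Schur's lemma, to the per-frequency Gram matrices, so that the fiber of the second-moment map through a generic $f$ contains an $\Orth(m_2)$-orbit of dimension at least $m_2-1=\binom{n+1}{2}-2>\binom{n}{2}=\dim\Orth(n)$ for $n\ge 3$, is a clean and correct dimension count. The paper leaves this direction implicit (it is essentially folklore and is discussed in \cite{bandeira2017estimation}), but it is genuinely needed to justify the word ``minimal'' in the statement, so supplying it is a real improvement rather than a detour. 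Your caveats about the degenerate regimes are also apt: for $L\le 1$ the degree-$\le 2$ invariants already determine the $\Orth(n)$-orbit, so $d=2$ there, and for $n=2$ the strict inequality $m_2-1>\binom{n}{2}$ fails and one must argue instead via the lost phases of the power spectrum; the corollary should indeed be read with $L\ge 2$, consistent with the ``for all $\ell>1$'' hypothesis of Theorem~\ref{thm.main}.
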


\begin{corollary}[The sample complexity of multi-reference alignment for $\SO(3)$] \label{cor:mra_sample_complexity_so3} 
Consider the MRA model~$\eqref{eq:mra}$ with $\sigma\to\infty$, where $V = \left(\oplus_{\ell=0}^L H_\ell\right)^R$ and $G =\SO(3)$.
If $R \geq 3$, then the minimal number of observations required for accurate recovery of the $\SO(3)$ orbit of $f$, regardless of any specific algorithm, is $m/\sigma^6\to\infty$.
\end{corollary}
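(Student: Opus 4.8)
The plan is to obtain this corollary by feeding Theorem~\ref{thm.so3} into the high-noise sample-complexity dichotomy for MRA established in~\cite{abbe2018estimation,abbe2018multireference}. First I would recall the precise form of that dichotomy together with the translation between moments and invariants. When the $g_i$ in~\eqref{eq:mra} are i.i.d.\ uniform on a compact group $G$, the de-noised $d$-th moment of the observations, $\mathbb{E}_{g}\bigl[(g\cdot f)^{\otimes d}\bigr]$, is a $G$-invariant tensor whose entries are, up to an invertible linear change of coordinates, the $G$-orbit averages of the degree-$d$ monomials in the coordinates of $f$; as $f$ varies, these averages carry exactly the data of the $G$-invariant polynomials of degree at most $d$ evaluated at $f$. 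Consequently, the orbit of $f$ is recovered from its first $d$ moments if and only if invariants of degree at most $d$ separate the orbit of $f$ from every other orbit, and the results of~\cite{abbe2018estimation,abbe2018multireference} say that, as $m,\sigma\to\infty$, accurate recovery of the orbit of a generic $f$ is possible if and only if $m/\sigma^{2d^{\ast}}\to\infty$, where $d^{\ast}$ is the least such $d$. The corollary is therefore equivalent to the assertion $d^{\ast}=3$.

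The upper bound $d^{\ast}\le 3$ is immediate: Theorem~\ref{thm.so3} says that for $R\ge 3$ and any $L\ge 0$, invariants of degree at most three separate generic orbits in $(L^2(S^2)_L)^R$, so the first three moments already determine the generic orbit. For the matching lower bound $d^{\ast}\ge 3$ one checks that the first two moments do not suffice for a generic signal with $L\ge 1$ (for $L=0$ the action is trivial, $d^{\ast}=1$, and the statement should be read with $L\ge 1$). The first moment records only the $H_0$ part, while the de-noised second moment is a quadratic invariant that, frequency by frequency, amounts to the Gram matrix of the $R$ shell-vectors lying in $H_\ell$; since $\dim H_\ell = 2\ell+1$, once $\ell\ge 2$ this data has far fewer parameters than the $\SO(3)$-orbit at that frequency, and even for $\ell=1$ the Gram matrix pins down the configuration only up to $\Orth(3)$, leaving the reflection ambiguity that the $\SO(3)$ orbit problem must resolve. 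This is the classical power-spectrum obstruction already observed in~\cite{bandeira2017estimation}. Hence $d^{\ast}=3$, and substituting into $m/\sigma^{2d^{\ast}}\to\infty$ yields the claimed threshold $m/\sigma^{6}\to\infty$; the general-$n$ statement, Corollary~\ref{cor:mra_sample_complexity}, follows identically with Theorem~\ref{thm.main} in place of Theorem~\ref{thm.so3} (and the $\Orth(n)$ orbit in place of the $\SO(3)$ orbit).

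I expect essentially all the work to be carried by Theorem~\ref{thm.so3}; the only genuinely delicate point in the reduction is the lower bound $d^{\ast}\ge 3$, where one must run the second-moment parameter count for the multi-shell representation with $R=3$ (rather than for a single shell) and be careful about the small values of $L$, so that $\sigma^{6}$ is indeed the honest complexity. Once that is pinned down, the remaining steps are a direct appeal to the orbit/moment dictionary and to the sample-complexity theorems of~\cite{abbe2018estimation,abbe2018multireference}.
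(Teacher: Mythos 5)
Your proposal is correct and matches the paper's route: the paper states this as an immediate corollary of Theorem~\ref{thm.so3} combined with the high-noise sample-complexity dichotomy of~\cite{abbe2018estimation,abbe2018multireference}, exactly as you do. Your added verification of the lower bound $d^{\ast}\geq 3$ (the power-spectrum/reflection obstruction for $L\geq 1$) is a point the paper leaves implicit, and it is handled correctly.
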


An interesting extension of the MRA model is the heterogeneous MRA model, where each measurement corresponds to a translated and noisy observation of one of 
$K$ underlying signals (the model in~\eqref{eq:mra} corresponds to the special case  ($K=1$)).
In this setting, it is not possible to recover the third-degree invariant of each individual signal; only their average can be estimated. Nevertheless, recent works~\cite{bandeira2017estimation, boumal2018heterogeneous} provide strong evidence that, in various MRA settings, it is possible to estimate the orbits of multiple signals simultaneously $(K>1$) from the average of their degree-three invariants.
We conjecture that a similar phenomenon holds in the model studied in this paper. In particular, a compelling direction for future work is to establish bounds on orbit recovery from third-degree invariants in the heterogeneous case, as a function of the number of shells $R$ and the number of signals $K$.

\subsection{Implications to structural biology.}
The MRA model~\eqref{eq:mra} with $G=\SO(3)$ fits the problem of subtomogram averaging in cryo-ET, which is one of the main steps in recovering molecular structures in in-situ environments~\cite{watson2024advances}. Since the noise level in cryo-ET data is extremely high, Corollary~\ref{cor:mra_sample_complexity_so3} implies that the number of observations must be larger than $\sigma^6$ for recovery. 

The problem of single-particle reconstruction in cryo-EM follows a similar model, but we have access only to the tomographic projections of the rotated copies of the signal~\cite{bendory2020single}. Namely, The cryo-EM model reads
\begin{equation} \label{eq:cryo}
    y_i = P(g_i\cdot f) + \varepsilon_i, 
\end{equation}
where \rev{$Pf(z_1,z_2)=\int_{z_3}f(z_1,z_2,z_3)dz_3$} is a tomographic projection. 
Thus, the results of this paper cannot be directly applied to derive the sample complexity of cryo-EM. 
Yet, following the results of this paper, and~\cite[Conjecture 4.17]{bandeira2017estimation}, we conjecture that for enough spherical shells, a signal can be recovered from the degree-three invariants in the cryo-EM model.

\begin{conjecture}[The sample complexity of the cryo-EM model] \label{conj:cryo} 
Consider the cryo-EM model~$\eqref{eq:cryo}$.
Then, if $R \geq 3$ is large enough, then
the minimal number of observations required for accurate recovery of the $\Orth(n)$ orbit of $f$, regardless of any specific algorithm, is $m/\sigma^6\to\infty$.
\end{conjecture}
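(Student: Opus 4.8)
The plan is to reduce Conjecture~\ref{conj:cryo} to a purely algebraic statement about the cryo-EM moment map, and then to prove that statement by the frequency-marching/full-rank strategy underlying Theorem~\ref{thm.so3}, carried through the extra structure introduced by the tomographic projection $P$. By~\cite{abbe2018estimation,abbe2018multireference}, for a uniform group action the sample-complexity threshold is $m/\sigma^{2d}$, where $d$ is the smallest degree of a moment of the observable $y=P(g\cdot f)+\varepsilon$ whose value, viewed as a function of $f$, determines the $\Orth(3)$-orbit of $f$; and, since the law on $\SO(3)$ is uniform, the degree-$d$ moment of $y$ is---up to $\sigma$-dependent additive constants coming from $\varepsilon$---the $\SO(3)$-average of the degree-$d$ moment of the fixed image $Pf$, hence an $\SO(3)$-invariant polynomial in $f$. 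So it suffices to prove: (i) for $R$ above an explicit threshold, the moments of $y$ of degree at most three determine the generic orbit in $V_L=(L^2(S^2)_L)^R$; and (ii) the moments of degree at most two do not. Part (i) with $d=3$ gives sufficiency and (ii) gives the matching necessity, so together they yield the threshold $m/\sigma^6$.

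The next step is to make the representation theory explicit via the Fourier slice theorem, which identifies $\widehat{Pf}$ with the restriction of $\hat f$ to the equatorial plane $\{\xi_3=0\}$. Writing the component of $f$ on shell $s$ as $\sum_{\ell\le L}\sum_{|m|\le\ell}a^{(s)}_{\ell m}Y_\ell^m$ and using that $Y_\ell^m(\pi/2,\varphi)$ equals a constant $c_{\ell m}$ times the circular harmonic of frequency $m$, with $c_{\ell m}=0$ whenever $\ell-m$ is odd, the order-$m$ angular Fourier coefficient of $P(g\cdot f)$ on shell $s$ is $\sum_{\ell:\ \ell-m\ \mathrm{even}}c_{\ell m}\,[D^\ell(g)\,a^{(s)}_{\ell\bullet}]_m$, where $D^\ell$ is the Wigner matrix. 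Consequently the first moment of $y$ retains only the $\ell=0$ data; the second moment, by Schur orthogonality of the $D^\ell$, fixes each frequency block $A_\ell=(a^{(s)}_{\ell m})_{s,m}$ only up to an orthogonal ambiguity, independently for each $\ell$, and only through its $\ell-m$ even coordinates since $P$ discards the rest; and the third moment supplies cubic equations, indexed by frequency triples $(\ell_1,\ell_2,\ell_3)$ and weighted by products of Wigner-$3j$ symbols with the constants $c_{\ell m}$, coupling these blocks. For (ii), the point is that for $\ell\ge2$ these residual orthogonal ambiguities are not all simultaneously realized by one global rotation or reflection, so the degree-$\le2$ moments have positive-dimensional generic fibers; this can be checked by a dimension count of the same kind as in the unprojected case.

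For (i) I would run the frequency-marching scheme of Section~\ref{sec:so3}: fix a base case built from the low-frequency blocks $A_0,A_1$ and the global $\Orth(3)$ action, and then, assuming $A_0,\dots,A_{\ell-1}$ recovered, single out the subfamily of third-moment equations that are \emph{linear} in the unknown block $A_\ell$ with coefficients determined by the already-known blocks, and show this linear system has full column rank for generic $f$ once $R$ is large. Enlarging $R$ only adds equations of this shape, so beyond a suitable threshold the system is overdetermined and generically full-rank; this is precisely the hypothesis isolated in~\cite[Conjecture 4.17]{bandeira2017estimation}, so the task reduces to verifying that hypothesis by replaying the arguments of Sections~\ref{sec:invariant}--\ref{sec:proof_main} with the projection constants $c_{\ell m}$ inserted and the $\ell-m$ odd coordinates removed, adapting the bound of Theorem~\ref{thm.main} to compensate for the data lost to $P$.

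I expect the main obstacle to be exactly this rank computation. Because $P$ annihilates the $\ell-m$ odd coordinates, roughly half the data on each shell disappears, so one must show that enough independent linear relations survive across shells and frequency triples; the surviving coefficient matrices are structured Khatri--Rao/Clebsch--Gordan products, and proving that they do not drop rank on a Zariski-dense set---and pinning down the sharp $R$, in particular whether it can be taken independent of $L$ as in Theorem~\ref{thm.so3}---will likely need an induction on $\ell$ together with an explicit nonvanishing argument for a well-chosen minor, more delicate than the determinant computations behind Theorems~\ref{thm.main} and~\ref{thm.so3}. A secondary difficulty is the base case: one must verify that the degree-$\le3$ moments of the projected data already determine $A_0$, $A_1$ and the reflection class, since otherwise the march cannot start.
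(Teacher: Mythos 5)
The statement you are addressing is Conjecture~\ref{conj:cryo}, which the paper explicitly leaves open: the authors state that their results ``cannot be directly applied'' to the cryo-EM model~\eqref{eq:cryo} because of the tomographic projection $P$, and they record the claim only as a conjecture (following~\cite[Conjecture 4.17]{bandeira2017estimation}). There is therefore no proof in the paper to compare against, and your submission is, by your own account, a research plan rather than a proof. Your reduction to an algebraic statement about the degree-$\le 3$ moments of $P(g\cdot f)$ via~\cite{abbe2018estimation,abbe2018multireference}, and your use of the Fourier slice theorem to express those moments through the restriction constants $c_{\ell m}$ (vanishing for $\ell-m$ odd), are the standard and correct first moves; the lower bound (ii) is essentially Kam's observation that the second moment only determines each block $A_\ell$ up to an independent orthogonal ambiguity.

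The genuine gap is that the entire content of the conjecture sits in the step you defer: showing that, for generic $f$ and $R$ large, the linear system in the unknown block $A_\ell$ extracted from the third moments of the \emph{projected} data has full column rank. In the unprojected setting, Proposition~\ref{prop.freqmarching} gets this from the surjectivity of $H_i\otimes H_{\ell-i}\to H_\ell$, which forces the bilinear forms $C(f^i[m],f^{\ell-i}[n])_k$ to be linearly independent. After applying $P$, the accessible third-moment entries are no longer the individual invariants $I_3(\ell_1,\ell_2,\ell_3)$ but fixed linear combinations of them weighted by products of the $c_{\ell m}$, so the coefficient matrix is a contraction of the Clebsch--Gordan data against these weights, and no analogue of the surjectivity argument is available; one must rule out rank collapse caused by the specific numerical values of the $c_{\ell m}$ and the $3j$ symbols. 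You correctly flag this as ``the main obstacle'' and propose ``an explicit nonvanishing argument for a well-chosen minor,'' but you do not exhibit such a minor, nor do you carry out the base case (recovery of $A_0$, $A_1$ and the reflection class from projected moments) or the dimension count for (ii). Until those computations are done, this is a plausible program for attacking an open problem, not a proof of it.
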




\subsection{Related work}
In~\cite[Conjecture 4.11]{bandeira2017estimation},  the authors conjecture that if $L \geq 10$
then the generic orbit in $\mathbf{L}^2(S^2)_L = \oplus_{\ell=0}^L H_\ell$ can be determined up to a finite list of orbits from invariants of 
degree at most three. They also showed that the bound $L \geq 10$ is sharp. This conjecture was recently proved in~\cite{fan2021maximum}. Although \cite[Conjecture 4.11]{bandeira2017estimation} is about finite list recovery, it is possible 
that for $L$ sufficiently large, the generic orbit in $V_L$ can also be recovered from the bispectrum; i.e. an orbit can be determined up to a list of size one. To that end, ~\cite{liu2021algorithms} produces a frequency marching algorithm, which can be used to determine, for $\ell$ sufficiently large, the $\ell$-th frequency component of $f \in V_L$, denoted by $f^\ell$, from a portion of the bispectrum and the prior
knowledge of the components $f^i$ with $i < \ell$. Each step in the frequency march requires the solution to a linear system of equations for coefficients of $f^\ell$, and the condition number of this system can be bounded in terms of the parameters of the model. However, the algorithm of~\cite{liu2021algorithms} cannot be used to directly recover an unknown function $f \in V_L$ from invariants of low degree, since it requires as prior input an unspecified number of components of $f$ of low frequency.

We also mention that when the support of the signal on the sphere is small then it was demonstrated in~\cite{bendory2022compactification} 
that the $\SO(3)$-orbit can also be recovered from the bispectrum.
In addition, a recent series of works showed that recovery from the second moment is possible, if the signal is known to lie in a semi-algebraic set of low dimension; for example, if the signal is sparse or is in the image of a deep neural network~\cite{bendory2022sparse,bendory2024transversality,amir2025stability}.

\section{Representation and invariant theory of harmonic polynomials}
\label{sec:invariant}
The \\
decomposition~\eqref{eq.symdecomp} implies that $H_\ell$ is the irreducible representation of $\SO(n)$ with highest weight vector $\ell L_1$ 
in the notation of~\cite[Section 19]{fulton1991representation}.
As a consequence, we obtain the following proposition.
\begin{proposition}\label{highestweight}
The tensor product $H_i \otimes H_{\ell-i}$ contains a unique copy $H_\ell$.  
\end{proposition}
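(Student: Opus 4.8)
The plan is to verify that $H_\ell$ appears in $H_i \otimes H_{\ell-i}$ with multiplicity exactly one by combining a highest-weight/Littlewood–Richardson-type argument for the existence and uniqueness of the top constituent with the symmetric-power decomposition~\eqref{eq.symdecomp}. The key observation, already recorded in the excerpt, is that $H_\ell$ is the irreducible representation of $\SO(n)$ with highest weight $\ell L_1$. First I would note that $H_i \otimes H_{\ell-i}$ is a quotient (and, after accounting for traces, a summand) of $\Sym^i(\mathbb{R}^n) \otimes \Sym^{\ell-i}(\mathbb{R}^n)$, which surjects onto $\Sym^\ell(\mathbb{R}^n)$ by polynomial multiplication; restricting to the harmonic summands, multiplication of harmonic polynomials followed by harmonic projection gives a nonzero $\SO(n)$-equivariant map $H_i \otimes H_{\ell-i} \to H_\ell$, since the product of the two highest-weight vectors $x_+^i \cdot x_+^{\ell-i} = x_+^\ell$ (where $x_+ = x_1 + \sqrt{-1}\,x_2$) is already harmonic and is the highest-weight vector of $H_\ell$. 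This establishes that the multiplicity is at least one.

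For the upper bound, I would argue at the level of weights: the weight $\ell L_1$ occurs in $H_i \otimes H_{\ell-i}$ only as the product of the top weight $i L_1$ of $H_i$ with the top weight $(\ell-i)L_1$ of $H_{\ell-i}$, because every weight $\mu$ of $H_k$ satisfies $\mu \preceq k L_1$ in the dominance order with the $L_1$-coefficient of $\mu$ at most $k$, and equality forces $\mu = k L_1$, which is a weight of multiplicity one in $H_k$. Hence the $\ell L_1$ weight space of $H_i \otimes H_{\ell-i}$ is one-dimensional, so at most one irreducible constituent can have highest weight $\ell L_1$, and it occurs with multiplicity one. Combining the two bounds gives the claim.

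An alternative, perhaps cleaner, route — and the one I would actually write up if the weight bookkeeping for general $\SO(n)$ feels heavy — is to use~\eqref{eq.symdecomp} together with the classical fact that pointwise multiplication $\Sym^i(S^{n-1}) \otimes \Sym^{\ell-i}(S^{n-1}) \to \Sym^{i+\ell-i}(\mathbb{R}^n)$ realizes $\Sym^\ell$; decomposing both sides via~\eqref{eq.symdecomp} and extracting the $H_\ell$ isotypic part, one sees $H_\ell$ occurs in the product $H_i \otimes H_{\ell-i}$ (the only pair of summands whose degrees can add up to give a term $H_\ell$ at the top) exactly once. One must still check that no "lower" pair $H_{i-2a} \otimes H_{\ell-i-2b}$ contributes another copy of $H_\ell$, which again reduces to the weight argument above since $H_\ell$ cannot sit inside $H_{i'} \otimes H_{j'}$ when $i' + j' < \ell$ (the maximal $L_1$-weight available is $(i'+j')L_1 \prec \ell L_1$).

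The main obstacle is the uniqueness half: one has to rule out $H_\ell$ appearing more than once in $H_i \otimes H_{\ell-i}$, and the slick highest-weight argument requires knowing that $\ell L_1$ is an extremal weight that can be hit in only one way from the tensor factors. For $\SO(n)$ with $n \geq 5$ (two or more "long" fundamental weights) I would double-check that no combination of a non-extremal weight of $H_i$ with a weight of $H_{\ell-i}$ can sum to $\ell L_1$; this follows because all weights of $H_k$ have nonnegative $L_1$-coordinate bounded by $k$ with the bound attained only at $\pm k L_1$ and its Weyl conjugates, none of which except $+kL_1$ is dominant, so the dominant weight $\ell L_1$ forces the extremal choice in each factor. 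Once that bookkeeping is in place the proposition follows immediately.
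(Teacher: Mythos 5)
Your proof is correct and follows essentially the same route as the paper's: both identify $\ell L_1$ as the unique maximal weight of $H_i \otimes H_{\ell-i}$, occurring with multiplicity one, which pins down a single copy of $H_\ell$. The paper's version is just this two-line weight-multiplicity observation; your added existence construction via harmonic projection of $x_+^i\, x_+^{\ell-i}$ and the explicit $L_1$-coordinate bookkeeping are correct elaborations of the same idea rather than a different argument.
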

\begin{proof}
The highest weight vector contained in the tensor product $H_i \otimes H_{\ell-i}$ is, in the notation of~\cite{fulton1991representation}, $i L_1 + (\ell -i)L_1 =\ell L_1$, which appears with multiplicity one. 
Hence, $H_i\otimes H_{\ell-i}$ 
contains a single copy $H_\ell$ as a highest weight representation.
\end{proof}

\subsection{Invariants of $\mathbf{L}^2(S^{n-1})_L$}
We now describe a collection of polynomial invariants of degree at most three in $V_L :=\mathbf{L}^2(S^{n-1})_L$.
To produce these invariant polynomials we first identify invariant tensors in $V_L^{\otimes k}$ (for $1 \leq k \leq 3$)
and then symmetrize\footnote{Recall that if $V$ is a vector space then the symmetrization of a tensor $t = v_1 \otimes \ldots \otimes v_k \in V^{\otimes k}$ is the symmetric tensor $St = {1\over{k!}}\sum_{\sigma \in S_k} (v_\sigma(1) \otimes \ldots \otimes v_{\sigma(k)})$. This operation
can be extended linearly to define a projection $S \colon V^{\otimes k} \to \Sym^k V$. The symmetrization of an
arbitrary element $x \in V^{\otimes k}$ is its image under this projection.} them to produce invariant polynomials in $\Sym^k V_L$. 
To begin, choose an orthonormal basis $e^\ell_1, \ldots , e^\ell_{m_\ell}$ for each summand $H_\ell$ in $V_L$.
An element $f \in V_L$ can be represented as an $L$-tuple $f= (f^0, \ldots , f^L)$ and each $f^\ell$ can be expanded 
as $f^\ell = \sum_{k=1}^{m_\ell} f^\ell_k e^\ell_k$.

\textbf{Invariants of degree one.} If $f \in V_L$ then the invariants of degree one in $f$ is just the projection of $f$ to the trivial one dimensional
summand $H_0$; i.e., the only invariant of degree one is $f^0$.

\textbf{Invariants of degree two.} The representations $H_\ell$ are self-dual via the quadratic form defining $\SO(n)$. Hence 
 $V_L \otimes V_L$ is identified with $\Hom(V_L, V_L)$. By Schur's lemma, $(V_L \otimes V_L)^{\SO(n)} = \oplus_{\ell=0}^L (H_\ell \otimes H_\ell)^\rev{\SO(n)}$
 and $(H_\ell \otimes H_\ell)^\rev{\SO(n)}$ is generated by the symmetric tensor $\sum_{k=1}^{m_\ell}e_k^\ell \otimes e_k^\ell$.
Thus, the invariants of degree 2 in $V_L$ are generated by the quadratic norms; i.e., the invariant polynomials $\sum_{k=1}^{m_\ell} (f_k^\ell)^2$.

\textbf{Invariants of degree three.}
Here, we only construct a subset of the degree three invariants that are sufficient to distinguish generic orbits.
Our approach to constructing invariants uses representation theory. Specifically, we will use representation-theoretic methods to construct invariant tensors in $(V_L)^{\otimes 3}$ and then symmetrize them to obtain an invariant polynomial. 
The following lemma identifies, for each $\ell$, a set of $\lfloor \ell /2\rfloor$ distinct invariant polynomials
of degree three, which are linear in the $\ell$-th component vector of $f \in \mathbf{L}^2(S^{n-1})_\ell$.

\begin{lem}\label{lem.degree3}
For fixed $\ell \leq L$ and for $1 \leq i \leq \lfloor \ell/2 \rfloor$,  the tensor product $H_i \otimes H_{\ell-i} \otimes H_{\ell}$  a nontrivial invariant tensor
whose symmetrization is not identically zero. Hence, we obtain an invariant polynomial of degree 3, which we label as $I_3(i,\ell-i,\ell)\in 
\Sym^3(H_i+H_{\ell-i}+H_\ell)^{\SO(n)}$.
\end{lem}
\begin{proof}
By Proposition~\ref{highestweight}, the tensor product $H_i \otimes H_{\ell-i}$ contains a single copy of the irreducible representation
$H_\ell$. Hence, by Schur's lemma, $(H_i \otimes H_{\ell-i} \otimes H_\ell)^G$ is one-dimensional. To produce an invariant polynomial, let $\phi_\ell \colon H_i \otimes H_{\ell-i} \to H_\ell$ be the projection onto the summand isomorphic to $H_\ell$. If $T_1,\ldots,T_{m_\ell} \in H_i \otimes H_{\ell-i}$ is an orthonormal basis for this summand, which is identified with the prechosen basis, then $\sum_{k=1}^{m_\ell} T_k \otimes e^\ell_k$
is an invariant in $H_i \otimes H_{\ell-i} \otimes H_\ell$. If $i ,\ell-i, \ell$ are distinct, then it is immediate that its symmetrization is non-zero. 
In the case where $i= \ell-i$ we note that the summand in $H_i \otimes H_i$ isomorphic to $H_{2i}$, necessarily lies in $\Sym^2 H_i$. 
The reason is that the highest weight of $H_{2i}$ equals the highest weight in the (reducible) representation $\Sym^2 H_{i}$ which is $(2i)L_1$. 
Thus, in this case, the $T_k$ are already symmetric, so the symmetrization of $\sum_{k=1}^{m_\ell} T_k \otimes e_k^\ell$ also cannot vanish.
\end{proof}

The invariants constructed in Lemma~\ref{lem.degree3} can be made explicit. 
Let $f=(f^1 + \ldots + f^L) \in V_L$ with $f^\ell = \sum_{k=1}^{m_\ell} f^\ell_k e^\ell_k$.
We can write the projection of $f^i \otimes f^{\ell-i}$ to the summand $H_\ell$ as $\sum_{k=1}^{m_\ell}  C_k(f^i,f^{\ell -i}) T_k$, where
$C_k(f^i,f^{\ell-i})$ are bilinear forms in the coefficients $\{f^i_s, f^{\ell-i}_t\}$. Precisely, we have
\begin{equation}
C_k(f^i,f^{\ell-i}) = \sum_{s=1}^{m_i}\sum_{t=1}^{m_{\ell-i}} c_{s,t}^k f^i_s f^{\ell-i}_t.
\end{equation}
Here, the coefficients $c_{s,t}^k$ are the Clebsch-Gordan coefficients for
$\SO(n)$.
Since the projection $H_i \otimes  H_{\ell-i} \to H_\ell$ is surjective, we know that for generic choice of $f_i, f_{\ell-i}$ the coefficients $C_k(f_i,f_{\ell-i})$ are identically non-zero and that there can be no linear relations between 
the $C_k(f^i,f^{\ell-i})$ as bilinear forms in $f^i_s, f^{\ell-i}_t$.
The coefficients of the invariant part of the symmetrization of $f^i \otimes f^{\ell-i} \otimes f^\ell$ are the invariant polynomials
$\sum_{k=1}^{m_\ell} C_k(f^i,f^{\ell-i}) f^\ell_k$, which we denote by $I(i,\ell-i,\ell)(f^i,f^{\ell-i},f^\ell).$

\section{Proof of Theorem \ref{thm.main}}
\label{sec:proof_main}
We view an element $f \in \left(\mathbf{L}^2(S^{n-1})_L\right)^{R}$ as an $R$-tuple of functions $f[1], \ldots , f[R]$, 
one for each of the $R$ shells. Each function $f[r]$ is assumed to be bandlimited to frequency $L$, 
so we can write $f[r]= \sum_{\ell=0}^L f^\ell[r]$. In turn, the function $f^\ell[r]$ can be expanded 
in terms of an orthonormal basis $\{e_k^\ell[r]\}_{k =1}^{m_\ell}$ as $f^\ell[r] = \sum_{k=1}^{m_\ell} f_k^\ell[r]e_k^\ell[r]$.
Following similar notation used in~\cite{bendory2024sample}, we let $A^\ell$ be the $m_\ell \times R$ matrix such that $A^\ell_{k, r} = f_k^\ell[r]$.
Our goal is to recover, for a generic $f$, the matrices $A^\ell$ from invariants of degree at most three. 

Given a set of vectors $\{f^j[m]\}$ with $1 \leq j \leq \lceil \ell/2 \rceil$ and
$1\leq m \leq R$, the invariants $I_3(i,\ell-i,\ell)(f^i[m],f^{\ell-i}[m],f^\ell[r])$ determine linear equations for the coefficients
$f_k^\ell[r]$
\begin{equation}
\sum_{k=1}^{m_\ell} C_k(f^i[m],f^{\ell-i}[n]) f_k^\ell[r] = I_3(i,\ell-i,\ell)(f^i[m],f^{\ell-i}[n],f^\ell[r]).       
\end{equation}
If $i \neq j$, let 
\begin{align*}
 S_{i,j}[R] &= \{ (1,1), \ldots, (1,R), (2,1), \ldots , (R,1)\} \subset [1,R]^2, 
\end{align*}
and let 
\begin{align*}
S_{i,i}[R] &= \{(1,1), \ldots, (1,R)\}.
\end{align*}
Consider the system of $R(\ell-1) - (\lceil \ell/2 \rceil -1)$ linear equations 
for the coefficients $f^\ell[r]_k$
\begin{equation} \label{eq.system}
\left\{ \sum_{k=1}^{m_\ell} C_k(f^i[m],f^{\ell-i}[n]) f_k^\ell[r] = I_3(i,\ell-i,\ell)(f^i[m],f^{\ell-i}[n],f^\ell[r]) 
\right\}_{1 \leq i \leq \lfloor \ell/2 \rfloor, (m,n) \in S_{i,\ell-i}}
\end{equation}
\begin{proposition}\label{prop.freqmarching}
For a generic choice $f^j[m]$ with $j < \ell$ and $m=1, \ldots , R$,
the linear system~\eqref{eq.system} has full rank.
\end{proposition}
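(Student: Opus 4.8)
Here ``full rank'' means that the coefficient matrix $M$ of~\eqref{eq.system} --- which has $R(\ell-1)-(\lceil\ell/2\rceil-1)$ rows and $m_\ell$ columns --- has $\rank M=m_\ell$, so that~\eqref{eq.system} determines each $f^\ell[r]$ uniquely. The $\big(k,(i,m,n)\big)$ entry of $M$ is $C(f^i[m],f^{\ell-i}[n])_k$, which by the formula following Lemma~\ref{lem.degree3} is bilinear in $(f^i[m],f^{\ell-i}[n])$ and hence a polynomial in the coordinates of the free data $\{f^j[m]:1\le j<\ell,\ 1\le m\le R\}$. Since the nonvanishing of a fixed $m_\ell\times m_\ell$ minor of $M$ is a Zariski-open condition on the data, it suffices to produce one choice of the $f^j[m]$ for which the rows of $M$ span $H_\ell$.

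The plan is to use the block decomposition of $M$: its rows split into blocks indexed by $1\le i\le\lfloor\ell/2\rfloor$, the $i$-th block consisting of the vectors $B_i(f^i[m],f^{\ell-i}[n])\in H_\ell$ for $(m,n)\in S_{i,\ell-i}$, where $B_i\colon H_i\otimes H_{\ell-i}\to H_\ell$ is the equivariant projection onto the unique copy of $H_\ell$ supplied by Proposition~\ref{highestweight}. The algebraic input is the surjectivity of $B_i$ noted after Lemma~\ref{lem.degree3}, i.e.\ the linear independence of the bilinear forms $C(\cdot,\cdot)_k$. From it I would first deduce a one-slot refinement: for generic fixed $u\in H_i$ the linear map $B_i(u,-)\colon H_{\ell-i}\to H_\ell$ has maximal possible rank $m_{\ell-i}$, and symmetrically in the other argument. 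Indeed the set of bad $u$ is the image in $\Pro(H_i)$ of the rank-one tensors lying in $\ker B_i$, which is a proper, $\SO(n)$-invariant subvariety of $\Pro(H_i)$: were it everything, $B_i$ could not be surjective. Granting this, within the $i$-th block one argument ranges over $R$ generic shell vectors while the other is held fixed, so the resulting rows are the images of a generic subspace under a generically injective map; the two sub-families overlap in exactly their single common vector $B_i(f^i[1],f^{\ell-i}[1])$; and the span $W_i\subseteq H_\ell$ of the $i$-th block therefore has the expected dimension $\min\!\big(\min(R,m_i)+\min(R,m_{\ell-i})-1,\ m_\ell\big)$. For the lone block with $i=\ell-i$ (present only when $\ell$ is even) $B_i$ is symmetric, so the pairs $(m,1)$ add nothing and $S_{i,i}$ is taken to be just $\{(1,n)\}$; the same maximal-rank reasoning gives $\dim W_i=\min(R,m_i)$.

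It remains to show that $\sum_i W_i=H_\ell$ for generic data. Distinct blocks involve disjoint pieces of the data, so one may regard the $W_i$ as generic subspaces of $H_\ell$ subject only to their prescribed dimensions, and the expected-dimension estimate gives $\dim\sum_iW_i=\min\!\big(\sum_i\dim W_i,\ m_\ell\big)$. The hypothesis $R\ge\frac{m_\ell+(\lceil\ell/2\rceil-1)}{\ell-1}$ is precisely what forces $\sum_i\dim W_i\ge m_\ell$ once the per-block counts above are summed, which completes the argument. Equivalently, in dual language: a nonzero $\lambda\in H_\ell$ annihilates the $i$-th block iff the nonzero bilinear form $\langle\lambda,B_i(\cdot,\cdot)\rangle$ vanishes on all the chosen shell-vector pairs, a condition of positive codimension on the data for each $i$; summing these codimensions over $i$ and invoking the bound on $R$ makes the total exceed $\dim\Pro(H_\ell)$, so no such $\lambda$ survives for generic data.

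The dimension bookkeeping at the end is routine. The real work --- and the main obstacle --- is the passage from the bare surjectivity of $B_i$ to the maximal-rank behaviour of its one-slot restrictions and to the claim that distinct blocks contribute subspaces in sufficiently general position; this is where one must rule out accidental degeneracies among the Clebsch--Gordan coefficients, using the irreducibility of the $H_\ell$, the multiplicity-one assertion of Proposition~\ref{highestweight}, $\SO(n)$-equivariance, and the disjointness of the data feeding different blocks. A uniform way to carry it out is to exhibit one explicit, suitably nondegenerate test tuple $\{f^j[m]\}$ --- for instance built from weight vectors of the $H_j$ --- at which all the required minors of $M$ can be seen to be nonzero simultaneously.
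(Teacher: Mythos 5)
Your overall architecture (Zariski-openness, block decomposition by $i$, surjectivity of the projections $B_i$ as the algebraic input) matches the paper's, but the core of your argument is a different, more geometric route, and it has genuine gaps at exactly the steps you yourself flag as ``the real work.'' First, your justification of the one-slot claim is a non sequitur: surjectivity of $B_i$ on all of $H_i\otimes H_{\ell-i}$ does not imply that the locus of $u$ with $B_i(u,-)$ non-injective is proper in $\Pro(H_i)$. (For a general surjective bilinear map $V\otimes W\to U$ the slot maps can all have kernels; here injectivity is at least dimensionally possible since $m_{\ell-i}\le m_\ell$, but it does not follow from surjectivity.) Second, the assertions that the two sub-families within a block meet only in the line through $B_i(f^i[1]\otimes f^{\ell-i}[1])$, and that the spans $W_i$ of distinct blocks ``may be regarded as generic subspaces of $H_\ell$ subject only to their prescribed dimensions,'' are precisely the expected-dimension/general-position statements that need proof. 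The $W_i$ range over constrained families (images under $B_i$ of rank-one tensors built from $R$ vectors per factor), and disjointness of the underlying data gives independence of the families but not general position; nothing you write rules out, say, all achievable $W_i$ and $W_{i'}$ sharing a fixed subspace. So as written the proposal is a plan rather than a proof.

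The paper avoids all of this by staying linear-algebraic: it substitutes formal variables $x^i_{s,t}[m,n]$ for the products $f^i_s[m]f^{\ell-i}_t[n]$, observing that for the restricted index set $S_{i,\ell-i}[R]$ (and across distinct $i$, which involve disjoint frequencies) these monomial families admit no algebraic relations between distinct rows, so each row of the coefficient matrix is a vector of $m_\ell$ \emph{linearly independent} linear forms in its own private variables --- linear independence being exactly the dual formulation of surjectivity of $B_i$. Full rank over the rational function field, and hence for generic specializations, follows directly. This is why the system is restricted to the pairs $(1,n)$ and $(m,1)$: for the full grid $[1,R]^2$ the needed independence fails (see the Remark following the proof). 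If you want to salvage your approach, you would need to actually prove the generic injectivity of $B_i(u,-)$ (e.g.\ by evaluating at highest-weight vectors, which for $\SO(3)$ reduces to nonvanishing of explicit Clebsch--Gordan coefficients) and then establish the general-position claims; your closing suggestion of an explicit test tuple is the right fallback, but as the paper notes, carrying it out for general $\SO(n)$ requires explicit Clebsch--Gordan data that is hard to come by.
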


Given Proposition~\ref{prop.freqmarching},  the system~\eqref{eq.system}
will have a unique solution for $f^\ell[r]$ once\\ $R(\ell-1) - (\lceil \ell/2 \rceil -1) \geq \dim H_\ell$. Since the invariants
of degree two determine the Gram matrices $(A^\ell)^T A^\ell$, we know $A^1 = (f^1[1], \ldots , f^1[R])$ up to the action of
$\Orth(n)$. Once we choose a representative for $A_1$ in its $\Orth(n)$-orbit, we can determine the matrices $A^2, \ldots , A^L$
inductively using Proposition~\ref{prop.freqmarching}. 

\rev{\subsection{Proof of Proposition~\ref{prop.freqmarching}}
For fixed $i,m,n$ we have the linear equation 
\begin{equation} \label{eq:linear}\sum_{k=1}^{m_\ell}C_k(f^i[m],f^{\ell-i}[n]) f^\ell_\ell[r] = I_3(i,\ell-i,\ell)(f^i[m],f^{\ell-i}[n],f^{\ell}[r]),\end{equation}
where we view $C_k(f^i[m], f^{\ell-i}[n])$ as a linear form in variables $x^i_{s,t}[m,n] = f^i_s[m]f^{\ell-i}_t[m]$. 
For distinct pairs $(m,n), (m',n') \in S_{(i,\ell-i)}[R]$, there are no algebraic relations between the sets of $m_i m_{\ell-i}$ monomials
$\{x^i_{s,t}[m,n] = f^i_s[m]f^{\ell-i}_t[n]\}_{s,t}$ and $\{x^i_{s,t}[m',n'] =f^i_s[m']f^{\ell-i}_t[n']\}_{s,t}$, so we may view them as distinct sets of variables.
Thus, we obtain an $|S_{i,\ell-i}[R]|\times m_\ell$ linear system whose coefficient matrix is
$$M_i = \begin{bmatrix}
C_1(\{x^i_{s,t}[1,1]\}) & C_2(\{x^i_{s,t}[1,1]\}) & \ldots & C_{m_\ell}(\{x^i_{s,t}[1,1]\}) \\
\ldots \\
C_1(\{x^i_{s,t}[1,R]\}) & C_2(\{x^i_{s,t}[1,R]\}) & \ldots & C_{m_\ell}(\{x^i_{s,t}[1,R]\}) \\
C_1(\{x^i_{s,t}[2,1]\}) & C_2(\{x^i_{s,t}[2,1]\}) & \ldots & C_{m_\ell}(\{x^i_{s,t}[2,1]\}) \\
\ldots\\
C_1(\{x^i_{s,t}[R,1]\}) & C_2(\{x^i_{s,t}[R,1]\}) & \ldots & C_{m_\ell}(\{x^i_{s,t}[R,1]\}) \\
\end{bmatrix}
$$
if $i \neq \ell-i$, and
$$M_i = \begin{bmatrix}
C_1(\{x^i_{s,t}[1,1]\}) & C_2(\{x^i_{s,t}[1,1]\}) & \ldots & C_{m_\ell}(\{x^i_{s,t}[1,1]\}) \\
\ldots \\
C_1(\{x^i_{s,t}[1,R]\}) & C_2(\{x^i_{s,t}[1,R]\}) & \ldots & C_{m_\ell}(\{x^i_{s,t}[1,R]\})\\
\end{bmatrix}
$$
if $i = \ell-i$. Here, $C_k(\{x^i_{s,t}[m,n]\})$ is the linear form $\sum_{s=1}^{m_i}\sum_{t=1}^{m_{\ell-i}} c_{s,t}^k x^i_{s,t}[m,n]$.
\begin{lem} \label{lem.linind}
For fixed $i, (m,n)$ the 
the forms $C_k(\{x^i_{s,t}[m,n]\})$, $i = 1, \ldots , m_\ell$ are linearly independent
over $\mathbb{R}$. 
\end{lem}
\begin{proof}{Proof of Lemma~\ref{lem.linind}.} As $f^i_s[m], f^{\ell-i}_t[n]$ vary over $H_i$ and $H_{\ell-i}$ respectively,
the $m_\ell$ vectors $C_k(f^{i}_s[m]_{s,t}[m,n]\})$ span the image of $H_i \otimes H_{\ell-i}$
in $H_\ell$. This image must be all of $H_\ell$ because if it were not then it would span a proper
$\SO(n)$-invariant subspace of the irreducible representation $H_\ell$. Since $\dim H_\ell =m_\ell$
if there were a linear relation among the the forms $C_k(\{x^{i}_{s,t}[m,n]\})$ then
the image of $H_i \otimes H_{\ell-i}$ in $H_\ell$ would not span.
\end{proof}
We now show that the  matrix $M_i$ must have full rank over
the field of rational functions $\R(\{x^i_{s,t}[m,n]\}_{s,t,m,n})$. To see this note that, for fixed $i,m,n$, the $m_\ell$ forms $C_k(\{x^i_{s,t}[m,n])$ are linearly independent we can, after a linear change of coordinates,
assume that they are an $m_\ell$ element subset of the $m_i m_{\ell-i}$ variables
$\{x^i_{s,t}[m,n]\}$. To simplify the notation we temporarily denote the $C_k(\{x^i_{s,t}[m,n])$
as $X^i[m,n]_k$ so that the matrix $M_i$ can be rewritten as
$$M_i = \begin{bmatrix}
X^i[1,1]_1 & X^i[1,1]_2 & \ldots & X^i[1,1]_{m_\ell} \\
\ldots \\
X^i[1,R]_1 & X^i[1,R]_2 & \ldots & X^i[1,R]_{m_\ell} \\
X^i[2,1]_1 & X^i[2,1]_2 & \ldots & X^i[2,1]_{m_\ell} \\
\ldots\\
X^i[R,1]_1 & X^i[R,1]_2 & \ldots & X^i[R,1]_{m_\ell} \\
\end{bmatrix}
$$
if $i \neq \ell-i$, and
$$M_i = \begin{bmatrix}
X^i[1,1]_1 & X^i[1,1]_2 & \ldots & X^i[1,1]_{m_\ell} \\
\ldots \\
X^i[1,R]_1 & X^i[1,R]_2 & \ldots & X^i[1,R]_{m_\ell} \\
\end{bmatrix}
$$
where each entry is an independent variable. Clearly no maximal minor of this matrix can vanish,
so $M_i$ has full rank over any field which contains the variables $\{X^i[m,n]_k\}$. In particular it has full
rank over the field $\R(\{x^i_{s,t}[m,n]\}_{s,t,m,n})$.

Likewise, there are no algebraic relations among the $x^i_{s,t}[m,n]$
as $i$ varies in the range $1 \leq i \leq \lfloor \ell/2 \rfloor$ so the matrix 
$$M= \begin{bmatrix}
M_1\\
M_2\\
\ldots\\
M_{\lfloor \ell/2 \rfloor}
\end{bmatrix}$$
can, after a suitable coordinate change, be expressed as a matrix where each entry 
is a distinct variable. Hence it 
also has full rank over the field of rational functions $\R(\{x^i_{s,t}[m,n]\}_{i,s,t,m,n})$.
This implies that when we substitute values for the coefficients, we obtain, for generic choices of
the functions $f^{i}[n]$, for $0 \leq i \leq \lfloor \frac{\ell}{2} \rfloor$, a maximal rank matrix.}

\begin{remark} We can also consider the $R^2 \times \lfloor k/2 \rfloor$ linear system \begin{equation} \label{eq.bigsystem}
\left\{ \sum_{k=1}^{m_\ell} C_k(f^i[m],f^{\ell-i}[n]) f^\ell_k[r] = I_3(i,\ell-i,\ell)(f^i[m],f^{\ell-i}[n],f^\ell[r]) 
\right\}_{1 \leq i \leq \lfloor \ell/2 \rfloor, (m,n) \in [1,R]^2}
\end{equation}
If we can show that~\eqref{eq.bigsystem} has full rank, then for generic $f$ we can improve the bound in Theorem~\ref{thm.main} to
$R\geq \sqrt{ \frac{\dim H_\ell}{\lfloor \ell/2 \rfloor}}$. 
To do this, it suffices to find a single set of values for the $f^j_s[m]$ for which the system has full rank. However, without explicit knowledge of the Clebsch-Gordan coefficients, this is very difficult.
\end{remark}

\section{Orbit recovery over $\SO(3)$} 
\label{sec:so3}
The invariants we obtain for general $\SO(n)$ are not given explicitly due to the difficulty of choosing natural bases 
for the irreducible representations $H_\ell$ and computing the Clebsch-Gordan coefficients. However, for $\SO(3)$ there are
explicit functional bases for the $H_\ell$ and closed formulas for the Clebsch-Gordan coefficients. For these reasons, it is possible to use a mix of computational and theoretical techniques to answer orbit separation questions.
Before we prove Theorem~\ref{thm.so3}, we recall some known results, which we use in the proof.

For $\SO(3)$,  we have an explicit decomposition of $H_i \otimes H_j$ into irredcuibles for any $i,j$. 
Precisely, we know that 
\begin{equation}
H_i\otimes H_j=\bigoplus_{\ell=|i-j|}^{i+j} H_\ell.
\end{equation}
Hence, $H_\ell$ appears with multiplicity one in any tensor product $H_i\otimes H_j$ with $i+j\geq \ell \geq i-j$.
Using a basis for $H_\ell$ of spherical harmonic functions $Y_{\ell}^k(\theta,\phi)$, we can obtain a large number of explicit invariants
of degree three:
\begin{align}
I_3(\ell_1,\ell_2,\ell_3)(f^{\ell_1}, f^{\ell_2},f^{\ell_3}) =\sum_{k_1+k_2+k_3=0,|k_i|\leq \ell_i}(-1)^{k_1}
\langle \ell_2 k_2 \ell_3 k_3|\ell_1(-k_1)\rangle f^{\ell_1}_{k_1}f^{\ell_2}_{k_2}f^{\ell_3}_{k_3},
\end{align}
where \begin{align} \label{eq.so3inv}
\langle \ell_1 \, m_1 \, \ell_2 \, m_2 | \ell \, m \rangle = \delta_{m,m_1 + m_2} \sqrt{\frac{(2\ell+1)(\ell+\ell_1-\ell_2)!(\ell-\ell_1+\ell_2)!(\ell_1+\ell_2-\ell)!}{(\ell_1+\ell_2+\ell+1)!}} \nonumber\\\;\times 
\sqrt{(\ell+m)!(\ell-m)!(\ell_1-m_1)!(\ell_1+m_1)!(\ell_2-m_2)!(\ell_2+m_2)!} \nonumber\\ \;\times 
\sum_k \frac{(-1)^k}{k!(\ell_1+\ell_2-\ell-k)!(\ell_1-m_1-k)!(\ell_2+m_2-k)!(\ell-\ell_2+m_1+k)!(\ell-\ell_1-m_2+k)!}.
\end{align}
When $\ell_3= \ell$ and $\ell_1, \ell_2 < \ell$ but $\ell_1 + \ell_2 \geq \ell$, then~\eqref{eq.so3inv} gives a linear equation for the
unknowns $f^\ell_k$ in terms of the coefficients $f^{\ell_1}_s, f^{\ell_2}_t$ in the expansions 
$f^{\ell_1} = \sum_{s=-\ell_1}^{\ell_1} f^{\ell_1}_s Y_{\ell_1}^s$ and 
$f^{\ell_2} = \sum_{t=-\ell_2}^{\ell_2} f^{\ell_2}_t Y_{\ell_2}^t$ 
of $f^{\ell_1}$ and $f^{\ell_2}$ in spherical harmonics.
In~\cite[Section 4.6.1]{bandeira2017estimation}, the authors proved, using numerical techniques, that if $R \geq 3$ then for generic $f$ the 
unknown vectors $f^\ell[r]$ for $2 \leq \ell \leq 16$ are determined from the vectors $f^1[m]$ for $m =1 , \ldots R$.
In addition, if $R \geq 3$, then invariants of degree at most three determine the 
coefficients $f^1[m]$ up to the action of $\SO(3)$ (as opposed to $\Orth(3)$). 

\begin{proof}[Proof of Theorem~\ref{thm.so3}] When $R=3$, the expression $3(\ell-1) - (\lceil \ell/2 \rceil -1) \geq 2\ell+1$ when $\ell \geq 6$.
In particular, this means  by Proposition~\ref{prop.freqmarching} we can determine $f^\ell[r]$ for $k \geq  6$ from the vectors $f^i[m]$ for $1 \leq i  < \ell$ and $1 \leq m \leq 3$.
By the results of~\cite{bandeira2017estimation}, we know that invariants of degree at most three determine the $\SO(3)$ orbit of the unknown vectors
$f^i[m]$ for $1 \leq i \leq 6$. 
\end{proof}

\begin{remark}{(The optimality of Theorem~\ref{thm.so3})}
The result of Theorem~\ref{thm.so3} is as strong as possible in the following sense. If $R < 3$, then, as previously observed in~\cite{bandeira2017estimation}, for $L$ small enough invariants
of degree at most three in $V = (\mathbf{L}^2(S^2)_L)^R$ do not generate a field of transendence degree equal to $\dim V/\SO(3)$.
In this case, a frequency marching algorithm  cannot be used to recover a generic orbit from invariants of degree at most three. 
\end{remark}

\section{Numerical Experiments}
\label{sec:numerics}

In this section, we are interested in numerically investigating the performance of the frequency marching algorithm and its performance in the presence of noise.

\subsection{Setting}
Let $f \in \mathbb{R}^3$ be a smooth, real-valued function. We assume that $f$ is bandlimited in the sense that it can be represented using a truncated 3-D spherical-Bessel expansion:
\begin{equation}
\label{eq:3-d-fourier-bessel}
f(cr, \theta, \varphi) = \sum_{\ell=0}^{\ell_\text{max}} \sum_{m=-\ell}^{\ell} \sum_{s=1}^{S(\ell)} x_{\ell, m, s} \, \mathcal{Y}_\ell^m(\theta, \varphi) \, j_{\ell, s}(r).
\end{equation}
where $c$ is the bandlimit, $S(\ell)$ is determined by the Nyquist criterion~\cite{bhamre2017anisotropic}, and $j_{\ell, s}(r)$ is the normalized spherical Bessel function:
\begin{equation}
j_{\ell, s}(r) = \frac{4}{\lvert j_{\ell + 1}(u_{\ell, s}) \rvert} \, j_\ell(u_{\ell, s} r),
\end{equation}
with $j_\ell$ the spherical Bessel function of order~$\ell$ and $u_{\ell, s}$ the $s$-th positive zero of~$j_\ell$. The real spherical harmonics $\mathcal{Y}_\ell^m$ are defined using the complex spherical harmonics:
\begin{equation}
\mathcal{Y}_\ell^m(\theta, \varphi) =
\begin{cases}
\sqrt{2} \, \Re Y_\ell^m(\theta, \varphi), & m > 0, \\
Y_\ell^0(\theta, \varphi), & m = 0, \\
\sqrt{2} \, \Im Y_\ell^{|m|}(\theta, \varphi), & m < 0,
\end{cases}
\end{equation}
where the complex spherical harmonics $Y_{\ell}^m$ are defined as
\begin{equation}
Y_{\ell}^m(\theta, \varphi) = \sqrt{\frac{2\ell+1}{4\pi} \frac{(\ell-m)!}{(\ell+m)!}} \, P_{\ell}^m(\cos \theta) \, e^{i m \varphi},
\end{equation}
where $P_{\ell}^m$ are the associated Legendre polynomials with the Condon–Shortley phase. We set $c = 1/2$ to match the Nyquist sampling rate~\cite{levin20183d}. 

The sum over $s$ in~\eqref{eq:3-d-fourier-bessel} can be interpreted as summing over concentric radial ``shells." To investigate the effect of shell count, we limit the sum to a fixed number $R$: 
\begin{equation}
\label{eq:3-d-fourier-bessel_shells}
f(cr, \theta, \varphi) = \sum_{\ell=0}^{\ell_\text{max}} \sum_{m=-\ell}^{\ell} \sum_{s=1}^{R} x_{\ell, m, s} \, \mathcal{Y}_\ell^m(\theta, \varphi) \, j_{\ell, s}(r), \quad r \le 1.
\end{equation}

The bispectrum is then defined as
\begin{equation}
\label{eq:bispectrum}
\begin{array}{ll}
B(x)[\ell_1,\ell_2,\ell_3, s_1, s_2, s_3] = & \\ 
\sum_{\substack{m_1 + m_2 + m_3 = 0\\ |m_i| \leq \ell_i}} (-1)^{m_1} \left\langle \ell_2 m_2 \, \ell_3 m_3 \mid \ell_1 (-m_1) \right\rangle x_{\ell_1, m_1, s_1} x_{\ell_2, m_2, s_2} x_{\ell_3, m_3, s_3},
\end{array}
\end{equation}
where the Clebsch–Gordan coefficients $\langle \ell_1 m_1 \, \ell_2 m_2 \mid \ell m \rangle$ are defined in~\eqref{eq.so3inv}.

\subsection{Algorithm}
\label{sec:algorithm}
\rev{
While recovering $f$ from its bispectrum is a non-convex problem, recovering the coefficients corresponding to the $\ell$-th frequency, given the coefficients of frequencies $1,\ldots,\ell-1$, is a linear problem. Thus, we recover $f$ by successively solving the linear system of equations~\eqref{eq:bispectrum}; this is the frequency marching algorithm \rev{proposed in~\cite{bandeira2017estimation}, which thanks to our Theorem~\ref{thm.so3} is guaranteed to recover generic signals from the bispectrum, up to a global rotation.}

The algorithm receives $N_\mathrm{obs}$ noisy measurements. We first expand each measurement in the spherical-Bessel basis~\eqref{eq:3-d-fourier-bessel_shells}. Let $x^{(i)}_{\ell,m,s}$ denote the spherical-Bessel coefficients of the $i$-th noisy measurement. We estimate the zeroth-frequency coefficient by the empirical first moment
\begin{equation}
\label{eq:first_moment_est}
\rev{\widehat{x}_{0,0,s}=\frac{1}{N_{\mathrm{obs}}}\sum_{i=1}^{N_{\mathrm{obs}}}x^{(i)}_{0,0,s},\qquad 1\leq s\leq R.}
\end{equation}
\rev{Similarly, the Gram matrix of the $\ell=1$ coefficients is estimated from the empirical second moment}
\begin{equation}
\label{eq:second_moment_est}
\rev{\widehat{G}_{s,t}=\frac{1}{N_{\mathrm{obs}}}\sum_{i=1}^{N_{\mathrm{obs}}}\sum_{m=-1}^{1}x^{(i)}_{1,m,s}x^{(i)}_{1,m,t},\qquad 1\leq s,t\leq R.}
\end{equation}
The required bispectrum entries are estimated by averaging~\eqref{eq:bispectrum} over the noisy measurements. The $\ell=1$ component of the signal $f$, denoted $A^1\in \mathbb{R}^{3\times R}$, can be determined up to the action of $\Orth(3)$ from the Gram matrix $(A^1)^TA^1$, estimated by~\eqref{eq:second_moment_est}. 
Now let $U\in \mathbb{R}^{3\times R}$ be a matrix with $U^TU=\widehat{G}$. We have two distinct $\SO(3)$ orbits represented by $U$ and its reflection $U'$. Either $U$ or $U'$ is in the same $\SO(3)$-orbit of $A^1$. In order to determine the correct orbit we compare the determinants of the $3\times 3$ minors of $A^1$ with both of $U$ and $U'$. These determinants can be written as $I_3(1,1,1)(f^1[r_1],f^1[r_2],f^1[r_3])$ for $1\leq r_1,r_2,r_3\leq R$ in our notation and hence they are entries of the bispectrum~\eqref{eq:bispectrum}. This fixes a representative of the recovered orbit; the final reconstruction is nevertheless determined only up to a single global rotation, as is inherent to the orbit recovery problem.}

\begin{algorithm}
\caption{Volume reconstruction from noisy measurements using frequency marching\label{alg:1}}
\begin{algorithmic}[1]
\State \textbf{Inputs:} {Noisy measurements $\{V_i\}_{i=1}^{N_\mathrm{obs}}$, the maximal frequency $\ell_{\text{max}}$, and the number of shells $R$}
\State \textbf{Outputs:} 
The spherical-Bessel coefficients $x_{\ell, m, s}$~\eqref{eq:3-d-fourier-bessel_shells}, {up to a global rotation}
\Statex

\State {Expand each noisy measurement $V_i$ in the spherical-Bessel basis~\eqref{eq:3-d-fourier-bessel_shells}.}
\State {Estimate the zeroth-frequency coefficients using~\eqref{eq:first_moment_est}.}
\State {Estimate the Gram matrix of the $\ell=1$ coefficients using~\eqref{eq:second_moment_est}.}
\State {Estimate the required bispectrum entries by averaging~\eqref{eq:bispectrum} over $\{V_i\}_{i=1}^{N_\mathrm{obs}}$.}
\State {Factor $\widehat{G}=U^TU$ and use the estimated bispectrum entries $I_3(1,1,1)$ to choose between the two $\SO(3)$ orbits represented by $U$ and its reflection.}
\For{frequency $\ell' = 2$ to $\ell_{\text{max}}$}
        \State Compute $\{x_{\ell', m, s}\}_{m=-\ell', s=1}^{m=\ell', s=R}$ by solving the linear system of equations~\eqref{eq:bispectrum} given coefficients up to {frequency $\ell'-1$}.
\EndFor
\end{algorithmic}
\end{algorithm}

\rev{\subsection{Computational complexity}
We analyze the computational complexity of Algorithm \ref{alg:1}. 
The algorithm assumes that the required bispectrum entries are available and then recovers the spherical-Bessel coefficients degree by degree by solving linear systems.

For a fixed frequency $\ell'$, the algorithm uses all admissible degree triples involving the target frequency $\ell'$ and two already recovered lower frequencies. 
The number of such lower-frequency pairs scales as $O(\ell'^2)$. 
For each target radial shell, the unknowns are the $(2\ell'+1)$ coefficients $\{x_{\ell',m,s}\}_{m=-\ell'}^{\ell'}$ at that shell. 
The equations for different target shells are decoupled, so the recovery at frequency $\ell'$ consists of $R$ independent least-squares problems. 
For each target shell, the number of equations scales as $O(\ell'^2R^2)$, where the factor~$R^2$ comes from the two lower-frequency radial indices. 
Solving each least-squares problem costs $O(\ell'^2R^2\cdot(2\ell'+1)^2)=O(\ell'^4R^2)$. 
Since there are $R$ target shells, the total recovery cost at frequency $\ell'$ is $O(\ell'^4R^3)$. 
Summing over $\ell'=2,\ldots,\ell_{\text{max}}$ gives a recovery-stage complexity of $O(R^3\ell_{\text{max}}^5)$.

In addition to this recovery stage, the required bispectrum entries must be computed. 
In the implementation, for each target frequency and target shell, we compute the bispectrum entries involving that target frequency. 
Each bispectrum entry involves a summation over spherical-harmonic orders satisfying $m_1+m_2+m_3=0$. 
Summing this angular cost over the admissible degree triples and over all target frequencies contributes a factor~$O(\ell_{\text{max}}^5)$, and the three radial indices contribute a factor~$R^3$. 
Thus, computing the required bispectrum entries from $N_{\mathrm{obs}}$ observations costs $O(N_{\mathrm{obs}} R^3\ell_{\text{max}}^5)$. Thus, 
the overall computational complexity is $O(N_{\mathrm{obs}} R^3\ell_{\text{max}}^5)$. 
}

\subsection{Numerical experiments} 
We consider \rev{three} volumes in our experiments: the TRPV1 structure~\cite{gao2016trpv1}, available from the Electron Microscopy Data Bank (EMDB) under accession code \mbox{EMD-8117}\footnote{\url{https://www.ebi.ac.uk/emdb/}}, \rev{the 20S proteasome~\cite{von2025substrates}, available as \mbox{EMD-19151},} and the Plasmodium falciparum 80S ribosome~\cite{wong2014cryo}, available as \mbox{EMD-2660}. \rev{Two of these volumes have nontrivial molecular symmetries: TRPV1 has~$C_4$ symmetry, while the 20S proteasome has~$D_7$ dihedral symmetry. These examples allow us to evaluate recovery in the presence of prescribed or approximate symmetry.} All volumes were downsampled to $31^3$ voxels and expanded using varying values of $\ell_{\text{max}}$ and $R$. The experiments were conducted on a MacBook Pro (2023) equipped with an Apple M3 Pro chip (12-core CPU, 18-core GPU) and 36GB of unified memory, running macOS 15.5.
The code used to reproduce all numerical experiments is publicly available at 
\url{https://github.com/krshay/orbit-recovery-for-spherical-functions}.

\textbf{Recovery in the absence of noise.} We begin by demonstrating successful volume reconstructions from clean bispectrum data; see Figure~\ref{fig:reconstructions}. These results use $\ell_{\text{max}} = 10$ and $R = 8$ for both volumes. The code was executed natively without virtualization, and took approximately 17 seconds for the reconstruction. The molecular visualizations were produced using UCSF Chimera~\cite{pettersen2004ucsf}. We emphasize that the algorithm accurately recovers the volume for $R \geq 3$, as expected from the theory, and we provide an example with additional shells for visual clarity.

\begin{figure}[h]
  \centering
  \begin{subfigure}{0.32\textwidth}
    \includegraphics[width=\linewidth]{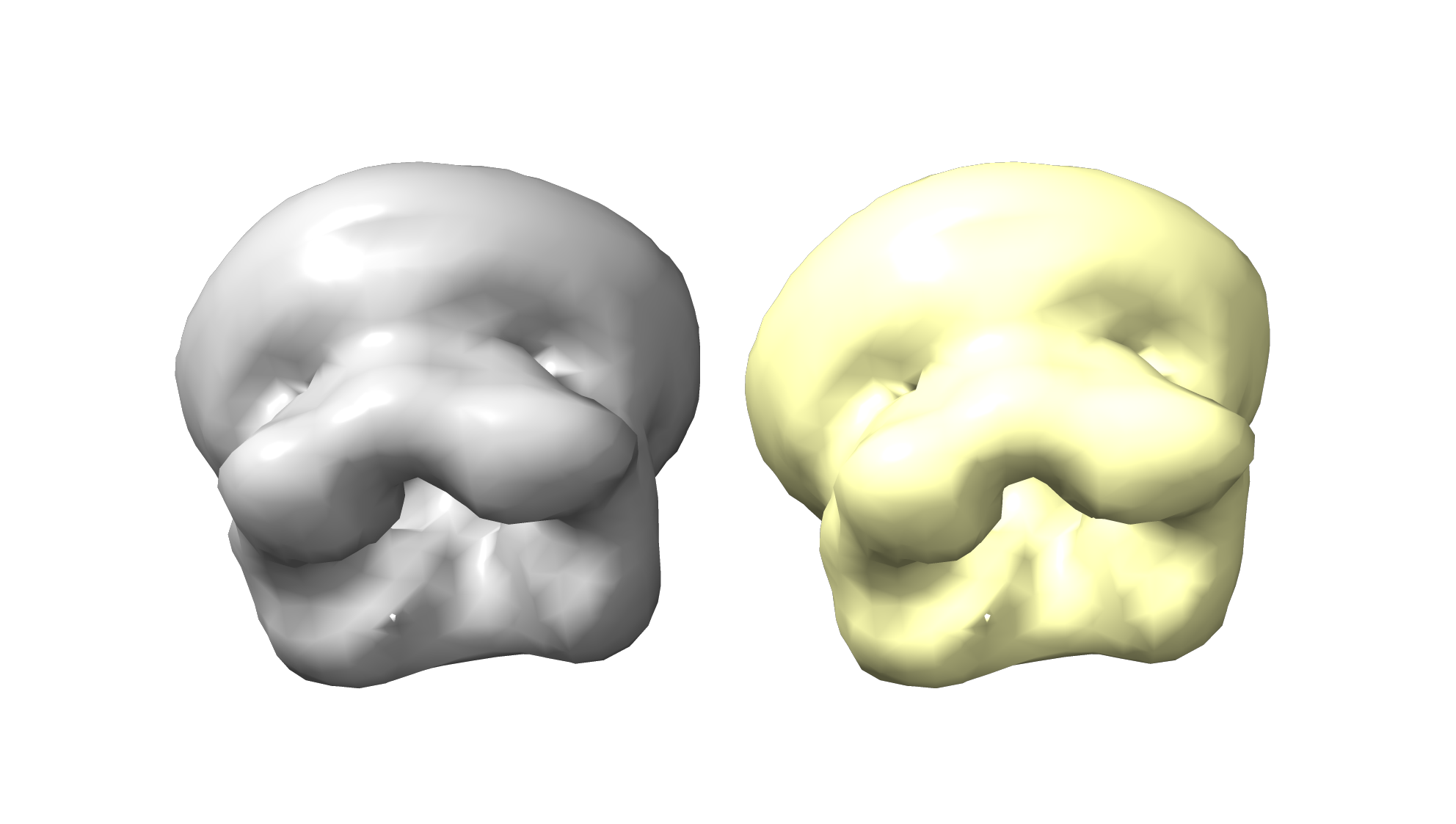}
    \caption{The TRPV1 structure}
    \label{fig:fig1}
  \end{subfigure}
  \hfill
  \begin{subfigure}{0.32\textwidth}
    \includegraphics[width=\linewidth]{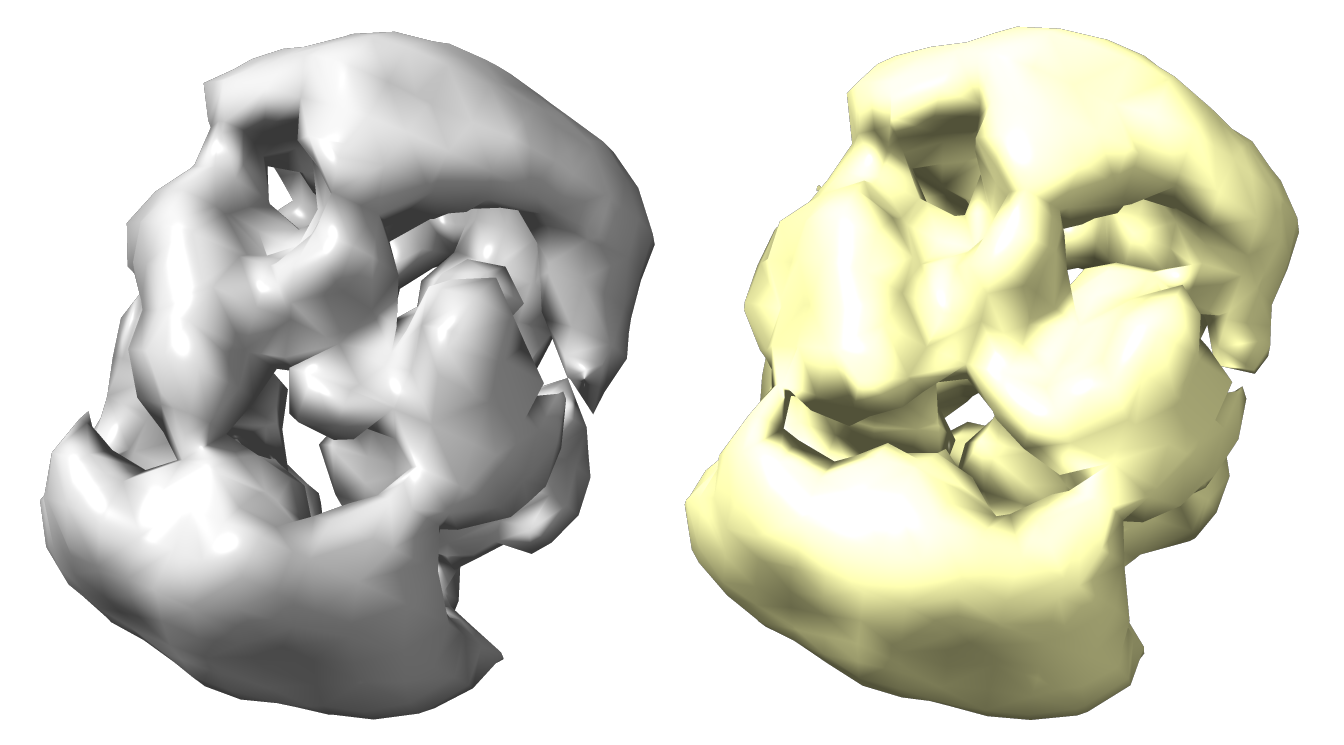}
    \caption{\rev{The 20S proteasome}}
    \label{fig:fig-d7}
  \end{subfigure}
  \hfill
  \begin{subfigure}{0.32\textwidth}
    \includegraphics[width=\linewidth]{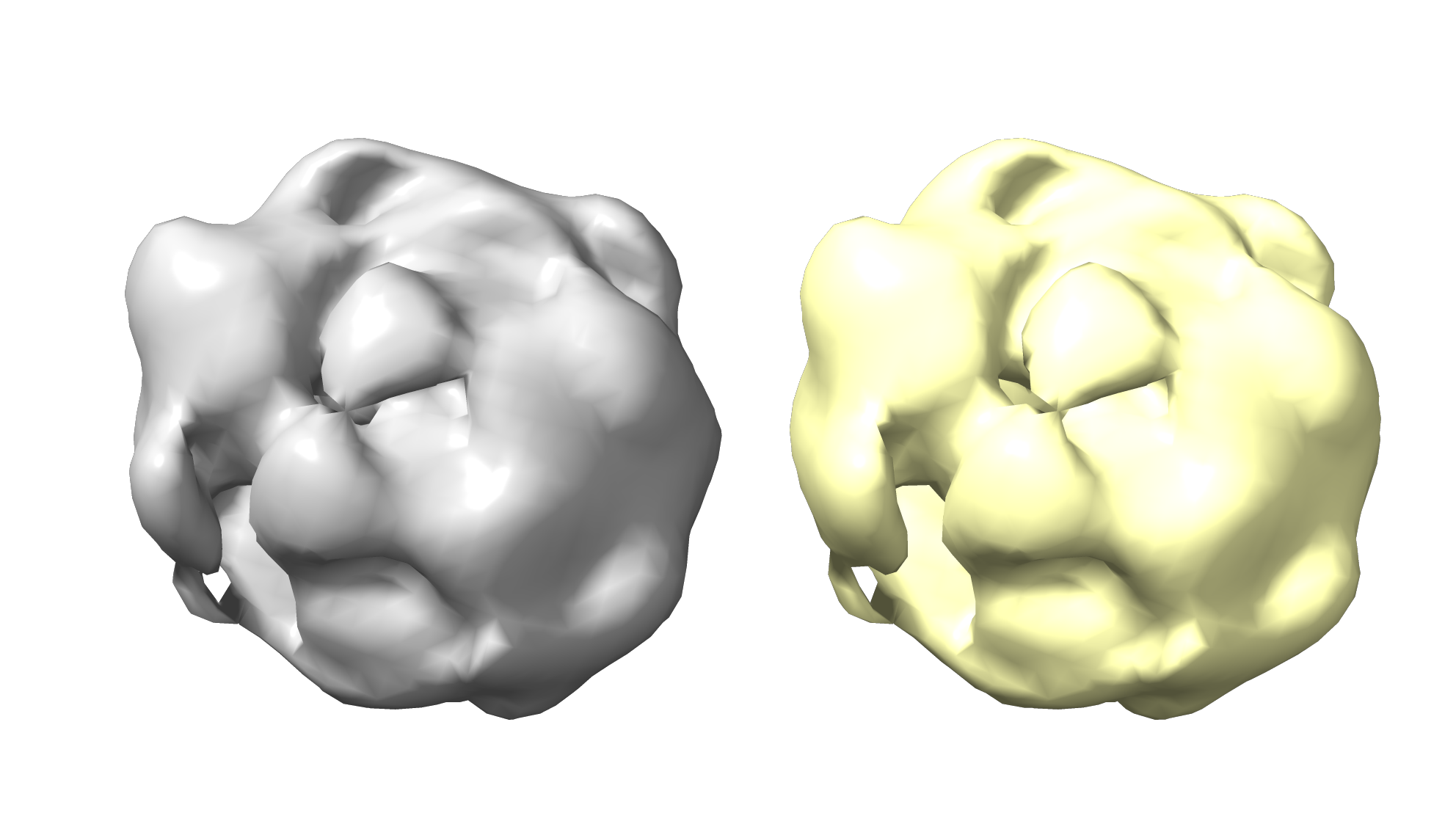}
    \caption{The 80S ribosome}
    \label{fig:fig2}
  \end{subfigure}
  \caption{Volume reconstructions from clean bispectrum, with $\ell_{\text{max}} = 10$ and $R = 8$ for the TRPV1 structure\rev{, the 20S proteasome}, and the 80S ribosome. Left (gray): ground truth after expanding to the corresponding maximal frequency $\ell_{\text{max}}$; right (yellow): reconstruction.}
  \label{fig:reconstructions}
\end{figure}

\textbf{Condition number analysis.} In practice, it is important that the recursive system of linear equations has good condition numbers in order to ensure the stability of the solutions. Fortunately, our numerical analysis indicates that these systems are generally well-conditioned. Tables~\ref{tab:trpv1-cond}, \rev{~\ref{tab:d7-cond}}, and \ref{tab:80s-cond}, and report the condition number. \rev{Overall, the condition numbers are moderate across all tested settings and generally improve with frequency and with the number of radial shells. This indicates that the recursive systems are well-conditioned in practice.}
\rev{The running times for producing these condition-number tables were approximately 2.4, 3.7, and 6.2 seconds for TRPV1 with $R=3,4,5$, respectively; approximately 2.6, 4.0, and 6.3 seconds for the 20S proteasome with $R=3,4,5$, respectively; and approximately 2.6, 4, and 5.9 seconds for the 80S ribosome with $R=3,4,5$, respectively.}

\begin{table}[h]
  \centering
  \caption{Condition numbers for TRPV1}
  \label{tab:trpv1-cond}
  \begin{tabular}{c|ccc}
    \toprule
    $\ell$ & 3 shells & 4 shells & 5 shells \\
    \midrule
    2  & 8749 & 1391 & 609.9 \\
    3  & 247.9 & 115.0 & 25.12 \\
    4  & 27.32 & 15.11 & 15.65 \\
    5  & 17.52 & 11.98 & 11.65 \\
    6  & 19.36 & 11.68 & 10.96 \\
    7  & 21.64 & 16.44 & 13.85 \\
    8  & 15.38 & 10.92 & 9.707 \\
    9  & 14.52 & 11.29 & 10.56 \\
    10 & 15.24 & 11.35 & 10.63 \\
    \bottomrule
  \end{tabular}
\end{table}

\begin{table}[h]
  \centering
  \caption{Condition numbers for the 20S proteasome}
  \label{tab:d7-cond}
  \begin{tabular}{c|ccc}
    \toprule
    $\ell$ & 3 shells & 4 shells & 5 shells \\
    \midrule
    2  & 590.9 & 109.4 & 11.15 \\
    3  & 380.4 & 37.97 & 10.41 \\
    4  & 229.1 & 108.8 & 39.15 \\
    5  & 57.22 & 32.47 & 18.12 \\
    6  & 41.12 & 30.80 & 13.44 \\
    7  & 29.46 & 18.75 & 10.85 \\
    8  & 99.47 & 30.92 & 12.05 \\
    9  & 22.21 & 13.00 & 7.410 \\
    10 & 26.57 & 13.06 & 8.435 \\
    \bottomrule
  \end{tabular}
\end{table}

\begin{table}[h]
  \centering
  \caption{Condition numbers for the 80S ribosome}
  \label{tab:80s-cond}
  \begin{tabular}{c|ccc}
    \toprule
    $\ell$ & 3 shells & 4 shells & 5 shells \\
    \midrule
    2  & 3.037 & 3.209 & 3.384 \\
    3  & 7.301 & 3.213 & 3.138 \\
    4  & 3.309 & 2.593 & 2.614 \\
    5  & 3.835 & 2.792 & 2.339 \\
    6  & 2.829 & 2.105 & 1.705 \\
    7  & 2.665 & 2.163 & 1.875 \\
    8  & 2.811 & 1.948 & 1.771 \\
    9  & 2.774 & 2.210 & 2.138 \\
    10 & 2.217 & 2.004 & 1.995 \\
    \bottomrule
  \end{tabular}
\end{table}

\newpage

\textbf{Robustness to Noise}. In practical settings, the bispectrum is not available in its exact form but rather estimated from noisy observations. To evaluate the algorithm's robustness to noise, we consider the following experimental setup.
Let $f_{\text{true}}$ denote the ground-truth volume---in this case, the Plasmodium falciparum 80S ribosome. We simulate noisy measurements according to the model $ V_i = f_{\text{true}} + \varepsilon_i $, where each $ \varepsilon_i $ is a Gaussian noise vector with zero mean and variance $\sigma^2=0.5$. 
A total of 500 noisy measurements $ V_i $ are generated.
For each noisy measurement, we compute its bispectrum and average the results across all 500 instances. The coefficient vector is then estimated using Algorithm~\ref{alg:1}; \rev{the coefficients corresponding to frequencies $\ell=0$ and $\ell=1$ are assumed to be known.}. The experiment is performed with a bandlimit of $ \ell_{\text{max}} = 10 $, and the number of radial shells $ R $ is varied from 3 to 8. The resulting recovery errors are presented in Figure~\ref{fig:err_num_shells}.
The recovery error is computed as
\begin{equation}
\text{Relative Error} = \frac{\|x - \hat{x}\|_\text{F}}{\|x\|_\text{F}},
\end{equation}
where $x$ denotes the ground-truth expansion coefficients, $\hat{x}$ the recovered coefficients, and $\|\cdot\|_{\text{F}}$ the Frobenius norm.
Interestingly, while theoretical guarantees suggest that 3 shells suffice for accurate reconstruction, the results indicate that using more than 3 shells yields improved robustness in the presence of noise.

\begin{figure}[h]
\centering
	\includegraphics[width=0.7\columnwidth, keepaspectratio]{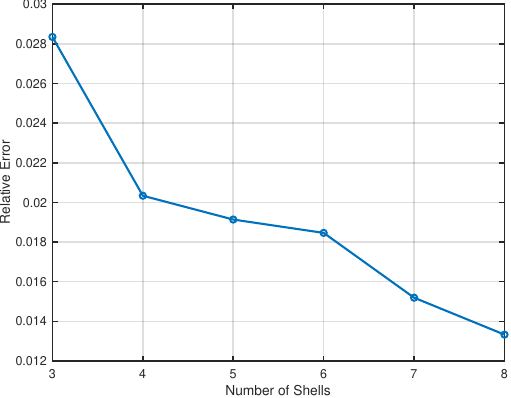}
	\caption{Recovery error of the 80S ribosome as a function of the number of shells, under additive Gaussian noise.}
	\label{fig:err_num_shells}
\end{figure}

\bibliographystyle{siamplain}

\end{document}